\newtheorem{theorem}{Theorem}
\newtheorem{lemma}{Lemma}
\theoremstyle{definition}
\newtheorem{definition}{Definition}
\newtheorem{example}{Example}
\newtheorem{remark}{Remark}
\newtheorem{assumption}{Assumption}
\newtheorem{problem}{Problem}
\newtheorem{algorithm}{Algorithm}
\newcommand{\maps}{\colon}
\newcommand{\R}{\mathbb{R}}
\newcommand{\argmin}{\mathrm{argmin}}
\newcommand{\id}{\mathrm{id}}
\newcommand{\define}[1]{\textbf{#1}}
\newcommand{\FF}{\mathcal{F}}
\DeclareMathOperator{\face}{\trianglelefteqslant}
\renewcommand{\R}{\mathbb{R}}
\renewcommand{\FF}{\mathcal{F}}
\newcommand{\XX}{\mathcal{X}}
\DeclareMathOperator{\image}{im}
\renewcommand{\leq}{\leqslant}
\renewcommand{\geq}{\geqslant}
\definecolor{gatororange}{HTML}{FA4616}
\definecolor{buzzgold}{HTML}{EAAA00}
\definecolor{techgold}{HTML}{B3A369}
\definecolor{navyblue}{HTML}{003057}
\definecolor{techgreen}{HTML}{22EE66}
\newif\ifshowcomments
\title{\LARGE \bf
Asynchronous Nonlinear Sheaf Diffusion for Multi-Agent Coordination
}
\author{Yichen Zhao$^{\ast}$, Tyler Hanks$^{\ast}$, Hans Riess$^{\ast}$, Samuel Cohen, Matthew Hale, James Fairbanks 
\thanks{$^\ast$These authors contributed equally.}
\thanks{Tyler Hanks and Samuel Cohen are with the Department of Computer and Information Science and Engineering and James Fairbanks is with the Department of Mechanical and Aerospace Engineering, University of Florida, emails:
        {\tt\small \{t.hanks,samuel.cohen, fairbanksj\}@ufl.edu}.
        Yichen Zhao, Hans Riess, and Matthew Hale are with the Department of Electrical and Computer Engineering, Georgia Institute of Technology, emails: 
        {\tt\small \{yzhao654,riess, mhale30\}@gatech.edu}}
\thanks{Riess was supported by DARPA under grant HR00112530235. Hanks was supported by the National Science Foundation Graduate Research Fellowship Program under Grant No. DGE-1842473. Any opinions, findings, and conclusions or recommendations expressed in this material are those of the author(s) and do not necessarily reflect the views of the NSF. Hanks, Cohen, Hale, and Fairbanks were supported by DARPA grant HR00112220038. Hanks and Fairbanks were also partially supported by ONR grant N00014-23-1-2339.
Hale and Zhao were also supported by AFOSR under grants FA9550-19-1-0169 and FA9550-23-1-0120.}}
\begin{document}

\maketitle
\thispagestyle{empty}
\pagestyle{empty}

\begin{abstract}
    Cellular sheaves and sheaf Laplacians provide a far-reaching generalization of graphs and graph Laplacians, resulting in a wide array of applications ranging from machine learning to multi-agent control. In the context of multi-agent systems, so called coordination sheaves provide a unifying formalism that models heterogeneous agents and coordination goals over undirected communication topologies, and applying sheaf diffusion drives agents to achieve their coordination goals. Existing literature on sheaf diffusion assumes that agents can communicate and compute updates synchronously, which is an unrealistic assumption in many scenarios where communication delays or heterogeneous agents with different compute capabilities cause disagreement among agents. To address these challenges, we introduce asynchronous nonlinear sheaf diffusion. Specifically, we show that under mild assumptions on the coordination sheaf and bounded delays in communication and computation, nonlinear sheaf diffusion converges to a minimizer of the Dirichlet energy of the coordination sheaf at a linear rate proportional to the delay bound. We further show that this linear convergence is attained from arbitrary initial conditions and the analysis depends on the spectrum of the sheaf Laplacian in a manner that generalizes the standard graph Laplacian case. We provide several numerical simulations to validate our theoretical results.
\end{abstract}

\section{Introduction}~\label{sec:intro}
Multi-agent coordination is a fundamental problem in control and optimization~\cite{olfati-saber_consensus_2007}. A key difficulty arises from the heterogeneity of systems, where agents may have different capabilities, state spaces, or assigned roles~\cite{bao_recent_2022}. Further challenges arise in operational environments where communication is limited, such as wireless communication among low-flying UAVs~\cite{floreano_science_2015} (unmanned aerial vehicles) or communication among UUVs~\cite{akyildiz_underwater_2005} (unmanned undersea vehicles). 
In many of these environments, agents operate asynchronously, and they may both (a) generate information at different times and rates,
say, due to different clock frequencies~\cite{simeone_distributed_2008}, and (b) communicate information at different times and rates, 
due to environmental factors causing unexpected latency and throughput limitations~\cite{zeng_wireless_2016}.

There exists a wide range of networked multi-agent system models and applications, and this breadth has led to a large body of literature in which similar problems may require entirely new analyses to account for their particular features. To this end, the recently introduced unifying formalism of \emph{coordination sheaves}~\cite{hanks2025distributed} subsumes many existing graph-based coordination problems~\cite{mesbahi2010graph}. Roughly speaking, a coordination sheaf is a flexible data structure built upon a communication graph that assigns different spaces to both agents (nodes) and their interactions (edges). 
The spaces attached to nodes model the states or decision variables of the agents. 
The spaces attached to edges model measurements or communications between pairs of agents. 
This structure allows agents to have different goals and even for a single agent to pursue different goals with different neighbors capturing general forms of agreement and disagreement. 

Existing algorithms for coordination sheaves rely on synchronous computation and communication to produce new iterates. 
This degree of synchronization is not always possible in practical applications as agents
may compute new iterates and communicate them asynchronously leading to delays in when information is generated and shared
among the agents~\cite{bertsekas1989parallel}.  Such delays cause agents to use outdated and inconsistent values of other agents' iterates in their computations. To address these challenges in the context of asynchronous multi-agent coordination, our primary contribution in this paper is to bring the coordination sheaves framework into the asynchronous setting.

Specifically, we develop an asynchronous multi-agent coordination algorithm for finding the minima of a Dirichlet energy function whose gradient is a nonlinear sheaf Laplacian~\cite{hansen2021opinion}, a generalization of the graph Laplacian to the sheaf-theoretic setting. We show that, under mild conditions on agents' coordination sheaf, a Laplacian-based sheaf diffusion update law can be applied asynchronously with assured convergence to the global minimizer of their Dirichlet energy. The form of asynchrony we consider allows agents to have bounded delays (where the bound can be arbitrarily large), which is sometimes called ``partial asynchrony'' in the literature~\cite{bertsekas1989parallel}.
\subsection*{Summary of Contributions}
\begin{itemize}
    \item We develop a partially asynchronous algorithm for sheaf diffusion (Algorithm~\ref{alg:sheaf_async_diffusion}).
    \item We prove that partially asynchronous nonlinear sheaf diffusion converges to the global minimizer with at least a periodic linear convergence rate under
    mild conditions (Theorem~\ref{thm:linear_conv}).
    \item We provide numerical simulations demonstrating this convergence rate (Section~\ref{sec:sim}).    
\end{itemize}

\subsection*{Related Work}
Cellular sheaves originate in algebraic topology \cite{shepard_cellular_1985,curry2014sheaves,bredon_sheaf_2012} 
and have recently emerged as a powerful modeling tool in systems engineering. Their ability to model local-to-global phenomena has found rich applications in opinion dynamics \cite{hansen2021opinion}, gossip \cite{riess_diffusion_2022}, mechanism design \cite{riess2023max}, circuit design \cite{robinson2012circuits}, and graphic statics \cite{cooperband2023towards}, just to name a few disciplines. In machine learning, sheaves and their Laplacians have served as effective message-passing operators for supervised learning on graphs \cite{hansen_sheaf_2020,bodnar2022neural,battiloro_tangent_2024,zaghen_sheaf_2024}. Our definition of ``coordination sheaves'' is heavily influenced by seminal work on homological programming \cite{hanks2025distributed, hansen_distributed_2019}, a paradigm integrating distributed optimization and homological algebra. Despite the interest in asynchronous algorithms for machine learning \cite{dudzik2024asynchronous}, existing literature on sheaf diffusion has been confined to the synchronous setting. To the best of our knowledge, this paper is the first to develop and analyze asynchronous sheaf diffusion, including over coordination sheaves.

\subsection*{Outline}
The rest of this paper is organized as follows.
Section~\ref{sec:prelim} provides preliminaries on cellular sheaves and the problem statement. Section~\ref{sec:algorithm} 
develops an asynchronous sheaf diffusion algorithm. Section~\ref{sec:conv} analyzes the convergence properties. Section~\ref{sec:sim} applies the sheaf diffusion algorithm in various asynchronous and synchronous settings.
Section~\ref{sec:discussion} concludes the paper, after a brief discussion.
(For notational conventions, please refer to Table \ref{tab:notation}.)

\begin{table}[h!] 
\centering 
\begin{tabular}{c l} 
\toprule 
\textbf{Notation} & \textbf{Meaning} \\
\midrule 
$\sigma_{\max}(\cdot)$ & The \textbf{largest} singular value of an operator. \\
$\sigma_2(\cdot)$   & The \textbf{smallest nonzero} singular value of an operator. \\
$\lambda_{\max}(\cdot)$ & The \textbf{largest} eigenvalue of an operator. \\
$\lambda_2(\cdot)$  & The \textbf{smallest nonzero} eigenvalue of an operator. \\
$\vert \cdot \vert$  & The cardinality of a set. \\
$\|\cdot\|$         & The 2-norm (Euclidean norm or $\ell_2$-norm). \\
$A^{\top}$          & The transpose of a matrix $A$. \\
$A^{+}$             & The Moore-Penrose pseudoinverse of a matrix. \\
$\nabla f$          & The gradient of a function $f$.  \\
\bottomrule 
\end{tabular}
\caption{Nomenclature.}
\label{tab:notation} 
\end{table}
\vspace{-1em}

\section{Preliminaries \& Problem Formulation}\label{sec:prelim}

In this section we first introduce coordination sheaves as a model for multi-agent systems and coordination goals. We then formulate the problem of achieving these coordination goals asynchronously.

\begin{figure}[b]
    \centering
    \includegraphics[width=0.8\linewidth]{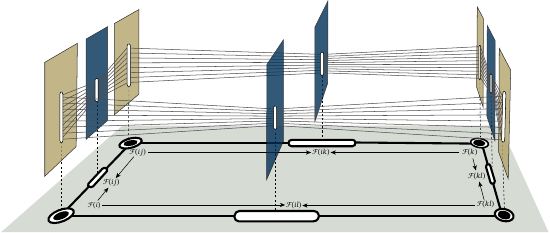}
    \caption{Mental model of a cellular sheaf: over a cycle graph, agent's stalks are tethered together with restriction maps.}
    \label{fig:celluarl-sheaf}
\end{figure}

\subsection{Cellular Sheaf Background}

We review the fundamental definitions of cellular sheaf theory (see \cite{curry2014sheaves,hansen2020laplacians,rosiak2022sheaf,ayzenberg2025sheaf}) in order to specify general classes of coordination problems over the communication graph of a system.
Recall an undirected graph consists of a pair $G = (V,E)$ where $V$ is a set of nodes and $E\subseteq V\times V$ is a set of edges. An edge between nodes $i$ and $j$ is denoted by an unordered pair of concatenated indices $ij=ji\in E$, and the neighbors of a given node is denoted $N_i \coloneqq \{j\in V\mid ij\in E\}$.

\begin{definition} \label{def:cellular-sheaf}
Given an undirected graph $G = (V,E)$, a \define{(Euclidean) cellular sheaf} $\mathcal{F}$ over $G$ is a data structure that assigns:
\begin{enumerate}
    \item[(a)] A Euclidean space $\mathcal{F}(i)$ and inner product $\langle - , - \rangle_{\mathcal{F}(i)}$ to every node $i \in V$ (\define{vertex stalk}).
    \item[(b)] A Euclidean space $\mathcal{F}(ij)$ and inner product $\langle - , - \rangle_{\mathcal{F}(ij)}$ to every edge $ij \in E$ (\define{edge stalk}).
    \item[(c)] A linear map $\mathcal{F}_{i\face ij}\maps\mathcal{F}(i) \rightarrow \mathcal{F}(ij)$ for every $i \in V$, $j \in N_i$ (\define{restriction map}).
\end{enumerate}
\end{definition}

Cellular sheaves, thus, attach spaces of various dimensions to a graph with restriction maps tethering these spaces together along edges as illustrated in Figure~\ref{fig:celluarl-sheaf}. Consider the following familiar example.

\begin{example}[Constant Sheaf]
    Given a graph $G$ and a dimension $n$, the constant sheaf, denoted $\underline{\mathbb{R}}^n$, assigns the vector space $\R^n$ to every vertex and edge stalk, and identity maps $\underline{\R}^n_{i\face ij} = \id_{\R^n}$ for every $i\in V, j\in N_i$.
\end{example}

We refer to a particular assignment $x_i \in \mathcal{F}(i)$ as a \define{local section}, and interpret this as a state for agent $i$. Edge stalks are interpreted as the communication or interaction space between agents. Collecting the data over all nodes with a direct sum of vector spaces yields the space of $0$-cochains, $C^0(G; \mathcal{F}) = \bigoplus_{i \in V} \mathcal{F}(i)$, and data over edges is similarly gathered in the space of $1$-cochains, $C^1(G; \mathcal{F}) = \bigoplus_{e \in E} \mathcal{F}(e)$. The total space $C^0(G;\FF)$ then corresponds to the global state space and $C^1(G;\FF)$ corresponds to the global communication space. These spaces, known in algebraic topology as cochain groups, are endowed with the inner products
\begin{align}
    \begin{aligned}
        \langle \mathbf{x}, \mathbf{x}' \rangle_{C^0(G;\mathcal{F})} &= \sum\nolimits_{i \in V} \langle x_i, x_i' \rangle_{\mathcal{F}(i)} \\
        \langle \mathbf{y}, \mathbf{y}' \rangle_{C^1(G; \mathcal{F)}} &= \sum\nolimits_{e \in E} \langle y_e, y_e' \rangle_{\mathcal{F}(e)}
    \end{aligned}
\end{align}
Norms are induced by these inner products in the usual way.

\begin{remark}
    Our definition of Euclidean cellular sheaves permits arbitrary inner products to be assigned to stalks, rather than just the standard $\ell_2$ inner product because of the interpretation of $\mathcal{F}(i)$ as a state space.
    For example, in a robotics system, it would be reasonable to choose the inner product on the state space $\mathcal{F}(i)$ so that $\|x_i\|^2_{\mathcal{F}(i)}$ is the Hamiltonian of the agent's dynamics.
\end{remark}

A ubiquitous problem in cellular sheaf theory is to decide whether $0$-cochains are coherent with respect to restriction maps. This consistency is expressed by the notion of a global section.

\begin{definition}\label{def:global-sections}
    Suppose $\mathcal{F}$ is a cellular sheaf over $G$. A \define{global section} is a $0$-cochain $\mathbf{x} \in C^0(G; \mathcal{F})$ such that $\mathcal{F}_{i \face ij}(x_i) = \mathcal{F}_{j \face ij}(x_j)$ for all $i \in V$ for all $j \in N_i$.
    The set of global sections is $\Gamma(G;\mathcal{F}) \subseteq C^0(G; \mathcal{F})$.
\end{definition}

Observe that the global section condition is equivalent to minimizing a quadratic potential function $(1/2)\|\FF_{i\face ij}(x_i) - \FF_{j\face ij}(x_j)\|_{\mathcal{F}(ij)}^2$ over each edge $ij \in E$. In a multi-agent system, this can be interpreted as requiring that communicating agents reach consensus on their outputs to neighboring agents. Indeed, if the vertex stalks correspond to agents' spatial positions and the constant sheaf is chosen on a connected communication graph, global sections are precisely positional consensus vectors. To generalize beyond consensus, we can alter the edge potential functions to encode different coordination goals. 

To make this precise, we need some notions from homological algebra. Define the \define{coboundary operator}
$C^0(G; \mathcal{F}) \xrightarrow{\delta_{\mathcal{F}}} C^1(G; \mathcal{F})$ given by $(\delta_{\mathcal{F}} \mathbf{x})_{ij} = \mathcal{F}_{i \face ij}(x_i) - \mathcal{F}_{j \face ij}(x_j)$.
The \define{degree-0} and \define{degree-1 cohomology} of a cellular sheaf is given by 
\begin{align}
    \begin{aligned}
        H^0(G; \mathcal{F}) &= \ker \delta_{\mathcal{F}} \\
        H^1(G; \mathcal{F}) &= C^1(G; \mathcal{F})/ \image \delta_{\mathcal{F}}
    \end{aligned}.
\end{align}
It follows immediately that $H^0(G; \mathcal{F})= \Gamma(G; \mathcal{F})$ because $x \in \ker \delta_{\mathcal{F}}$ precisely when $\mathcal{F}_{i \face ij}(x_i) - \mathcal{F}_{j \face ij}(x_j) = 0$ for all $ij \in E$. Global sections, then, can be computed as the kernel of the coboundary operator, where the relevant matrix grows in the number of nodes and edges of the graph and the dimensions of the stalks. To incorporate edge potential functions, we define the following notion of energy for a cellular sheaf.

\begin{definition} \label{def:energy-function}
    Given a cellular sheaf $\FF$ on a graph $G$ and potential functions $U_{e}\maps \FF(e)\to \R$ for each edge $e\in E$, we let the global potential function $U \maps C^1(G;\FF)\to \R$ be defined as $U(\mathbf{y})\coloneqq \sum_{e\in E} U_e(y_e)$.
    The \define{Dirichlet energy} function for $\FF$ is then $f \coloneqq U\circ \delta_\FF\maps C^0(G;\FF)\to \R$.
\end{definition}

For the following special cases, the relationship between the minimizers of the Dirichlet energy and global sections are made explicit:
\begin{enumerate}
    \item[(a)] If all potential functions are chosen to have a unique global minimizer at~$0$, then minimizers of the sheaf energy function 
    correspond to global sections:
    \[ \underset{\mathbf{x}\in C^0(G;\FF)}{\argmin} U\bigl(\delta_\FF(\mathbf{x})\bigr) = \Gamma(G;\FF).\]
    \item[(b)] If the global potential function $U$ 
    is minimized at some unique nonzero $b\in C^1(G;\FF)$ with $b\in \image \delta_\FF$, then the minimizers of the energy function correspond to an affine shift of the global sections, i.e., 
    \[\underset{\mathbf{x}\in C^0(G;\FF)}{\argmin} U\bigl(\delta_\FF(\mathbf{x})\bigr) = \delta_\FF^+b + \Gamma(G;\FF) \]
    (see \cite[Theorem 2]{hanks2025distributed}).
\end{enumerate}

A sheaf together with a family of edge potential functions provides the data of a coordination sheaf.

\begin{definition} \label{def:coordination:}
    A \define{coordination sheaf} consists of a choice of
\begin{enumerate}
    \item[(a)] communication graph $G = (V,E)$,
    \item[(b)] cellular sheaf $\mathcal{F}$ as in Definition \ref{def:cellular-sheaf},
    \item[(c)] family of edge potentials $\{U_e: \mathcal{F}(e) \to \mathbb{R}\}_{e \in E}$.
\end{enumerate}
Every coordination sheaf has an associated Dirichlet energy function~$f \coloneqq U \circ \delta_{\mathcal{F}}$ where~$U \coloneqq \sum_{e \in E} U_e$ (Definition~\ref{def:energy-function}). 
\end{definition}

\subsection{Problem Statement}

The main goal in this paper is to develop an asynchronous distributed algorithm for achieving multi-agent goals specified by coordination sheaves. We formalize this as the problem of minimizing the Dirichlet energy of a sheaf.

\begin{problem}\label{pro:1}
     Given a coordination sheaf~$(G, \FF, \{U_e\}_{e\in E})$, asynchronously solve 
     \begin{equation}\label{eq:cost_func}
         \underset{\mathbf{x}\in C^0(G;\FF)}{\text{minimize}}\: f(\mathbf{x})=U\circ\delta_\FF\left(\mathbf{x}\right).
     \end{equation}  
 \end{problem}

The key to the design of coordination sheaves is in choosing restriction maps and edge potentials such that minimizing the Dirichlet energy corresponds to achieving the desired coordination goal. The following example illustrates this philosophy with a representative application achievable by solving Problem \ref{pro:1}.

\begin{example}[Moving UAV formations]
    Consider the following coordination task for two teams of three unmanned aerial vehicles (UAVs), each with a designated leader. Each UAV is modeled by state spaces $\mathbb{R}^3 \oplus \mathbb{R}^3$ with concatenated state vectors $[p_i,v_i]^\top$ for position $p_i$ and velocity $v_i$. The task is for each team of three UAVs to maintain a fixed triangle formation by adjusting their position to achieve a specific 3D displacement vector $\hat{p}_{ij} \in \mathbb{R}^3$ relative to their leader. The formations need to move towards a target and, tracking the target, the leaders of each team align their velocity vectors together. The following coordination sheaf $\mathcal{F}$ encodes this task:
    \begin{itemize}
        \item The graph has nodes $V = \{1,2,3,4,5,6\}$. There are edges between members of the same team as well as between the leaders:  $E = \{ 12, 13, 23, 45, 46, 56, 14\}$.
        \item The node stalks are state spaces $\mathcal{F}(i) = \mathbb{R}^3 \oplus \mathbb{R}^3$. The edge stalks are $\mathcal{F}(ij) = \mathbb{R}^3$ (see Fig.~\ref{fig:coord-sheaf}).
        \item The restriction maps are projections (see Fig.~\ref{fig:coord-sheaf}).
        \item The edge potentials are $U_e(y_e) = (1/2)\| y_e - \hat{p}_{ij} \|^2$ between leader and followers and $U_e(y_e) = (1/2) \|y_e\|^2$ between leaders.
    \end{itemize}
    Minimizing the Dirichlet energy function of the sheaf 
    \begin{equation}
        f(\mathbf{x}) = \frac{1}{2}\| v_{1} - v_{4} \|^2 + \smashoperator{\sum}\nolimits_{ij \in \{12,13,45,46\}} \frac{1}{2} \| p_i - p_j - \hat{p}_{ij} \|^2
    \end{equation}
    achieves the coordination goal. Solving Problem~\ref{pro:1} would allow the agents to still achieve this goal even in the presence of uncertain communication and computation delays.
\end{example}

\begin{figure}[ht]
    \centering
    \includegraphics[width=0.9\linewidth]{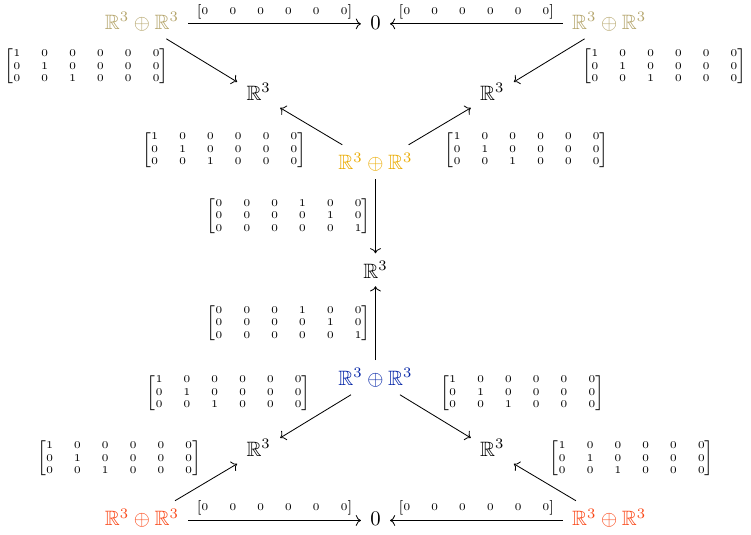}
    \caption{In the above coordination sheaf, $\mathcal{F}_{i \face ij} = \mathcal{F}_{j \face ij}$ between leaders and leaders are projection onto velocity while the restriction maps between leaders and followers are projection onto position. The restriction maps between followers are the zero map. The diagram layout is evocative of the intended formation.}
    \label{fig:coord-sheaf}
\end{figure}

\section{Algorithm Design}\label{sec:algorithm}

We first introduce the synchronous gradient descent algorithm for minimizing the Dirichlet energy and discuss its relation to the sheaf Laplacian. We then introduce our model of asynchrony and bring this algorithm into the asyncrhonous setting.

\subsection{Synchronous Algorithm}
For agents to minimize Dirichlet energy in a distributed fashion, we use the nonlinear sheaf Laplacian (see \cite{hansen2021opinion}) defined as follows.

\begin{definition} \label{def:sheaf-laplacian}
    Given a cellular sheaf $\FF$ on a graph $G$ with global potential $U\maps C^1(G;\FF)\to\R$, the \define{nonlinear sheaf Laplacian} is a map $L_\FF^{\nabla U}\maps C^0(G;\FF)\to C^0(G;\FF)$ defined by the gradient of the energy function, i.e.,
    \[
    L_\FF^{\nabla U}\coloneqq \delta_\FF^\top\circ \nabla U\circ\delta_\FF = \nabla(U\circ\delta_\FF).
    \]
\end{definition}

\begin{example}\label{ex:linear_L}
Suppose $U(\mathbf{y}) = \sum_{e \in E} (1/2) \| y_e\|^2$. Then, $L_\FF^{\nabla U}$ is the linear sheaf Laplacian $L_\FF=\delta_\FF^\top\delta_\FF$ (as defined in \cite{Hansen_2019_spectral}). Furthermore, for these same quadratic potentials, $L_{\underline{\mathbb{R}}}^{\nabla U}$ specializes to the standard graph Laplacian \cite{chung_spectral_1997}.
\end{example}

In the synchronous setting, a natural choice for minimizing the Dirichlet energy function of a given sheaf $\FF$ is gradient descent, which can be given in terms of the nonlinear sheaf Laplacian (see Definition \ref{def:sheaf-laplacian}) as
\begin{align}
\begin{aligned}
    \mathbf{x}(t+1) \coloneqq & \mathbf{x}(t) - \gamma L_\FF^{\nabla U}\mathbf{x}(t)
\end{aligned} \label{eq:sheaf:diffusion}
\end{align}
for a given step-size $\gamma >0$, which we view as a discrete-time dynamical system called \define{sheaf diffusion} (see \cite{bodnar2022neural}). These iterates can be computed locally by having agent~$i$ execute
\begin{equation} \label{eq:local_update}
x_i(t{+}1) = x_i(t) - \gamma \bigl[ L_{\mathcal{F}}^{\nabla U} \mathbf{x}(t) \bigr]_i
\end{equation}
for all~$i \in V$, 
 where 
 \begin{equation}
 \bigl[ L_{\mathcal{F}}^{\nabla U} \mathbf{x}(t) \bigr]_i =
\smashoperator{\sum_{j\in N_i}}\FF^\top_{i\face ij}\,\nabla U_{ij}\bigl(\FF_{i\face ij}x_i(t) - \FF_{j\face ij}x_j(t)\bigr).
\end{equation}
These dynamics are local as each agent $i$ need only know its own state $x_i(t)$ and messages $\FF_{j\face ij}x_j(t)$ from it's immediate neighbors $j\in N_i$.
Under strong-convexity assumptions on the edge potentials, the sheaf diffusion dynamics drive any initial condition $\mathbf{x}(0)$ to the orthogonal projection of $\mathbf{x}(0)$ onto $\delta_\FF^+b + H^0(G;\mathcal{F})$, where $b$ is the (necessarily) unique minimizer of the global potential function $U=\sum_{e\in E} U_e$, provided $b\in\image \delta_\FF$ \cite{hanks2025distributed}. 

\subsection{Asynchronous Algorithm}
We use~$t\in \mathbb{N}$ as a discrete-time iteration counter, and let $T_i \subseteq \mathbb{N}$ denote the iterations at which agent~$i$ performs a computation. For each $t \in \mathbb{N}$, we define $\tau_j^i(t)$ to be the block model $x_j$ that is received by agent $i$ at its $t$-th iteration.
Note that agents do not need to know $T_{i}$ or $\tau^{i}_{j}$ for
any $i$ or $j$, they are only used for analysis. To solve Problem~\ref{pro:1}, we employ a partially asynchronous sheaf diffusion algorithm.
\begin{algorithm}\label{alg:sheaf_async_diffusion}
    Given a coordination sheaf (Definition~\ref{def:coordination:}), let $x_{1}\left(0\right),\ldots,x_{N}\left(0\right)$, and $\gamma>0$ be given. For all $i\in V$, asynchronously execute
\begin{equation}\label{eq:distributed_gd}
        x_i(t+1)=
        \begin{cases}
            x_i(t),  &t\notin T_i,\\
            x_i(t)-\gamma \bigl[ L_{\mathcal{F}}^{\nabla U} \mathbf{x}^i(t) \bigr]_i, & t\in T_i,
        \end{cases}
    \end{equation}
where the local updates in \eqref{eq:distributed_gd} are computed with the $0$-cochain $\mathbf{x}^i(t) = \left( x_1\left(\tau_1^i(t)\right), x_2\left(\tau_2^i(t)\right), \dots, x_N\left(\tau_N^i(t)\right)\right)$.
\end{algorithm}
    
Two standard assumptions about asynchrony appear in the distributed computing literature (see, e.g.~\cite{bertsekas1989parallel}). The first of these is the \emph{totally asynchronous} setting where delays can be unbounded. In this setting, the iterates of each agent's local state generated by~\eqref{eq:distributed_gd} may not converge to a minimizer for consensus-type problems whose Hessians are not block diagonally dominant~\cite[Theorem 4.1]{frommer2000asynchronous}. An example is given in the standard reference~\cite[Section 7.1, Example 1.3]{bertsekas1989parallel}. For a constant sheaf, the linear sheaf Laplacian reduces to a graph Laplacian (see Example~\ref{ex:linear_L}), which is only weakly diagonally dominant and thus may not converge under total asynchrony. Therefore we only consider the \emph{partially asynchronous} setting where maximum delay bounds are assumed. 

\begin{assumption}[Partial Asynchrony]\label{assumption:partial-asynchrony}
\leavevmode
    \begin{enumerate}
        \item[(a)] There exists an integer $B \geq 0$ such that \[\{t,t+1,\cdots,t+B\} \cap T_i \neq \emptyset\] for every $i \in V$ and for every $t \in T_i$.
        \item[(b)] For all $i \in V$, $t - B \leq \tau_j^i(t) \leq t$ for all $j \in N_i$ and for all $t \in T_i$.
        \item[(c)] For all $i \in V$ and for all $t \in T_i$, $\tau_i^i(t) = t$.
    \end{enumerate}
\end{assumption}

Part (a) ensures that each agent updates at least once every~$B+1$ iterations. Part (b) ensures that communication delays are bounded above by~$B$.
Part (c) ensures that each agent always has access
to the value of its latest iterate.
In short, all delays in communications and computations
are bounded by~$B$, but~$B$ is not necessarily small.

\section{Convergence Analysis}\label{sec:conv}

We show that given the sheaf Dirichlet energy function $f \coloneqq U\circ \delta_\FF$ on a coordination sheaf, the iterates generated by Algorithm~\ref{alg:sheaf_async_diffusion} converge to the global minimizer. 
We start by stating assumptions on each edge potential $U_e$ as follows.

\begin{assumption}[$m_e$-Strongly Convex]\label{as:strongly_cvx}
The edge potential~$U_e$ is $m_e$ strongly convex for each $e\in E$, i.e.,
    \begin{equation}
         \langle\nabla U_e(x_e)-\nabla U_e(y_e), x_e - y_e\rangle \geq {m_e}\|x_e - y_e\|^2
    \end{equation}
    for some $m_e>0$ and all $x_e,y_e\in\FF(e)$.
\end{assumption}

\begin{assumption}[$K_e$-Smooth]\label{as:u_e_lip}
The edge potential~$U_e$ is $K_e$-smooth for each $e\in E$, i.e.,
    \begin{equation}
         \| \nabla U_e(x_e) - \nabla U_e(y_e) \|\leq K_e \|x_e - y_e\| 
    \end{equation}
    for some $K_e>0$ and all $x_e,y_e\in\FF(e)$.
\end{assumption}

We then provide the following results for $f$.

\begin{lemma}\label{lem:f_bounded}
    The sheaf Dirichlet energy $f$ (Definition~\ref{def:energy-function}) is bounded below.
\end{lemma}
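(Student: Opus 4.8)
The plan is to reduce the claim to a statement about the individual edge potentials and then exploit the strong convexity assumed in Assumption~\ref{as:strongly_cvx}. Since $f = U\circ\delta_\FF$ with $U(\mathbf{y}) = \sum_{e\in E} U_e(y_e)$, it suffices to show that each $U_e$ is bounded below on $\FF(e)$. Then $U$ is bounded below on $C^1(G;\FF)$ by a finite constant, because $E$ is a finite set and a finite sum of finite lower bounds is finite; composing with $\delta_\FF$ preserves this bound, since $\delta_\FF\mathbf{x}\in C^1(G;\FF)$ for every $\mathbf{x}$.

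To bound a single $U_e$ below, I would invoke the standard quadratic lower bound implied by $m_e$-strong convexity. The monotonicity inequality in Assumption~\ref{as:strongly_cvx} is equivalent (for differentiable $U_e$) to
\[
U_e(y_e) \geq U_e(0) + \langle \nabla U_e(0), y_e\rangle + \tfrac{m_e}{2}\|y_e\|^2
\]
for all $y_e\in\FF(e)$, where I have fixed $0\in\FF(e)$ as a convenient reference point. The right-hand side is a strictly convex quadratic in $y_e$ with positive-definite curvature $m_e\,\mathrm{id}$, so it attains a finite global minimum; carrying out that minimization yields the explicit lower bound $c_e \coloneqq U_e(0) - \tfrac{1}{2m_e}\|\nabla U_e(0)\|^2 > -\infty$. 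Summing over the finite edge set gives $U(\mathbf{y}) \geq \sum_{e\in E} c_e =: c$ for all $\mathbf{y}\in C^1(G;\FF)$, and hence $f(\mathbf{x}) = U(\delta_\FF\mathbf{x}) \geq c$ for every $\mathbf{x}\in C^0(G;\FF)$.

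There is no substantial obstacle here; the statement is essentially a structural consequence of the hypotheses. The one point deserving emphasis — and the reason the lemma holds at all — is that \emph{strong} convexity, rather than mere convexity, is what is being used: a merely convex potential (for instance, a nonconstant affine function) need not be bounded below, whereas the $\tfrac{m_e}{2}\|y_e\|^2$ term forces coercivity and thus a finite infimum on each edge stalk. Note that the $K_e$-smoothness of Assumption~\ref{as:u_e_lip} is not required for this result.
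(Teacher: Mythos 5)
Your proposal is correct and follows essentially the same route as the paper's proof: strong convexity of each $U_e$ gives a finite lower bound on each edge potential, the finite sum over $E$ bounds $U$ below, and precomposition with $\delta_\FF$ preserves the bound. The paper simply asserts that $m_e$-strong convexity implies $U_e$ is bounded below, whereas you supply the supporting detail (the quadratic lower bound and the explicit constant $U_e(0) - \tfrac{1}{2m_e}\|\nabla U_e(0)\|^2$), which is a welcome but not structurally different addition.
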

\begin{proof}
    By Assumption~\ref{as:strongly_cvx}, the edge potential~$U_e$ is~$m_e$-strongly convex for each $e\in E$, meaning $U_e$ is bounded below. Thus the sum~$U(\mathbf{y})\coloneqq \sum_{e\in E} U_e(y_e)$ is also bounded below, which implies the sheaf Dirichlet energy~$f$ is bounded below.
\end{proof}

We show that the Lipschitz constant for $f$ can be obtained from the spectral properties of the linear sheaf Laplacian $L_\FF$, and the Lipschitz constants $K_e$ for each edge potential function $U_e$.

\begin{lemma}\label{lem:k_smooth}
The function $f$ (Definition~\ref{def:energy-function}) is $K$-smooth, i.e.,
\begin{equation}
     \left\|  L_\FF^{\nabla U}\mathbf{x}- L_\FF^{\nabla U}\mathbf{y}\right\| \leq K\left\| \mathbf{x}-\mathbf{y}\right\| \quad\forall \mathbf{x},\mathbf{y}\in C^0(G;\FF),
\end{equation}
where 
\begin{equation}\label{eq:lip_const}
    K=\bigl(\max_{e \in E} K_e\bigr)\cdot\lambda_{\max}(L_\FF)>0
\end{equation} 
is the Lipschitz constant.
\end{lemma}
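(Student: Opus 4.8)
The plan is to expand $L_\FF^{\nabla U}$ according to Definition~\ref{def:sheaf-laplacian} as the composite $\delta_\FF^\top \circ \nabla U \circ \delta_\FF$ and bound its three factors one at a time. First I would use the linearity of $\delta_\FF^\top$ to pull it outside the difference and bound it by its operator (spectral) norm:
\[
\left\|L_\FF^{\nabla U}\mathbf{x} - L_\FF^{\nabla U}\mathbf{y}\right\| = \left\|\delta_\FF^\top\bigl(\nabla U(\delta_\FF\mathbf{x}) - \nabla U(\delta_\FF\mathbf{y})\bigr)\right\| \leq \|\delta_\FF^\top\|\cdot\left\|\nabla U(\delta_\FF\mathbf{x}) - \nabla U(\delta_\FF\mathbf{y})\right\|.
\]

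The crux of the argument is the middle factor: I would show that $\nabla U$ is globally Lipschitz on $C^1(G;\FF)$ with constant $\max_{e\in E} K_e$. Because $U(\mathbf{y}) = \sum_{e\in E} U_e(y_e)$ is a direct sum over edges, its gradient splits edgewise as $(\nabla U(\mathbf{y}))_e = \nabla U_e(y_e)$. Using the $C^1$ inner product from Section~\ref{sec:prelim} together with Assumption~\ref{as:u_e_lip}, this gives
\[
\left\|\nabla U(\mathbf{y}) - \nabla U(\mathbf{y}')\right\|^2 = \sum_{e\in E}\left\|\nabla U_e(y_e) - \nabla U_e(y_e')\right\|^2 \leq \sum_{e\in E} K_e^2\|y_e - y_e'\|^2 \leq \bigl(\max_{e\in E} K_e\bigr)^2\|\mathbf{y} - \mathbf{y}'\|^2.
\]
Specializing to $\mathbf{y} = \delta_\FF\mathbf{x}$, $\mathbf{y}' = \delta_\FF\mathbf{y}$ and then bounding $\|\delta_\FF(\mathbf{x}-\mathbf{y})\| \leq \|\delta_\FF\|\cdot\|\mathbf{x}-\mathbf{y}\|$ yields the middle factor as $(\max_e K_e)\,\|\delta_\FF\|\cdot\|\mathbf{x}-\mathbf{y}\|$.

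Finally I would collect the three factors and identify the spectral constant. Combining the displays produces the bound $\|\delta_\FF^\top\|\cdot(\max_e K_e)\cdot\|\delta_\FF\|\cdot\|\mathbf{x}-\mathbf{y}\|$, so it remains to verify $\|\delta_\FF^\top\|\cdot\|\delta_\FF\| = \lambda_{\max}(L_\FF)$. Both operator norms equal the largest singular value $\sigma_{\max}(\delta_\FF)$, and $\sigma_{\max}(\delta_\FF)^2 = \lambda_{\max}(\delta_\FF^\top\delta_\FF) = \lambda_{\max}(L_\FF)$ by the definition of the linear sheaf Laplacian in Example~\ref{ex:linear_L}; this pins down $K = (\max_e K_e)\cdot\lambda_{\max}(L_\FF)$, with $K>0$ since each $K_e>0$ and $\delta_\FF\neq 0$. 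I do not expect a serious obstacle here: the only steps demanding care are the edgewise splitting of $\nabla U$, which is what converts the per-edge constants $K_e$ into the single factor $\max_{e\in E} K_e$, and the bookkeeping that combines the two coboundary factors into $\lambda_{\max}(L_\FF)$ itself rather than its square root.
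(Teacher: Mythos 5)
Your proposal is correct and follows essentially the same route as the paper's proof: pull $\delta_\FF^\top$ out by linearity, bound it and $\delta_\FF$ by $\sigma_{\max}(\delta_\FF)$, use the Lipschitz property of $\nabla U$ in between, and identify $\sigma_{\max}^2(\delta_\FF) = \lambda_{\max}(L_\FF)$. The only difference is that you explicitly verify the edgewise splitting $\|\nabla U(\mathbf{y})-\nabla U(\mathbf{y}')\|^2 = \sum_{e\in E}\|\nabla U_e(y_e)-\nabla U_e(y_e')\|^2 \leq \bigl(\max_{e\in E}K_e\bigr)^2\|\mathbf{y}-\mathbf{y}'\|^2$, a step the paper simply asserts from Assumption~\ref{as:u_e_lip}, so your write-up is if anything more complete.
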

\begin{proof}
    For any $\mathbf{x},\mathbf{y}\in C^0(G;\FF)$, we obtain
    \begin{align}
        \| L^{\nabla U}_\FF \mathbf{x} - L^{\nabla U}_\FF \mathbf{y} \|
        &= \| \delta_\FF^\top \nabla U(\delta_\FF \mathbf{x}) - \delta_\FF^\top \nabla U(\delta_\FF \mathbf{y}) \| \\
        &= \| \delta_\FF^\top \big( \nabla U(\delta_\FF \mathbf{x}) - \nabla U(\delta_\FF \mathbf{y}) \big) \| \\
        &\leq \|\delta_\FF^\top\| \| \nabla U(\delta_\FF \mathbf{x}) - \nabla U(\delta_\FF \mathbf{y}) \|, 
    \end{align}
by Assumption~\ref{as:u_e_lip}, the Lipschitz constant for the global potential is $\| \nabla U(\delta_\FF \mathbf{x}) - \nabla U(\delta_\FF \mathbf{y}) \|\leq \bigl(\max_{e \in E} K_e\bigr) \|\delta_\FF \mathbf{x} - \delta_\FF \mathbf{y} \|$, where $\max_{e \in E} K_e$ is the largest Lipschitz constant over all the edge potentials. Thus we obtain
    \begin{align}
        \| L^{\nabla U}_\FF \mathbf{x} - L^{\nabla U}_\FF \mathbf{y} \|
        &\leq \bigl(\max_{e \in E} K_e\bigr) \|\delta_\FF^\top\| \|\delta_\FF \mathbf{x} - \delta_\FF \mathbf{y}\| \\
        &\leq \bigl(\max_{e \in E} K_e\bigr) \|\delta_\FF^\top\| \|\delta_\FF\| \|\mathbf{x}-\mathbf{y}\| \\
        &= \bigl(\max_{e \in E} K_e\bigr) \sigma_{\max}^2(\delta_\FF) \|\mathbf{x}-\mathbf{y}\|,
    \end{align}
where $\|\delta_\FF^\top\| =\|\delta_\FF\|=\sigma_{\max}(\delta_\FF)$. Since
\begin{equation}
    \sigma_{\max}(\delta_\FF) = \sqrt{\lambda_{\max}(\delta^\top_\FF \delta_\FF)} = \sqrt{\lambda_{\max}(L_\FF)},
\end{equation}
therefore the Lipschitz constant is~\eqref{eq:lip_const}.
\end{proof}

\begin{remark}
The Lipschitz constant for the linear sheaf Laplacian reduces to $K=\lambda_{\max}\left( L_\FF \right)$ because all the edge potential functions are $U_e(y_e)=(1/2)\|y_e\|^2$, which is 1-Lipschitz. This result also agrees with the relevant bound for the graph Laplacian $L_{\underline{\mathbb{R}}}^{\nabla U}$ (see~\cite[Section 5.3.1]{mesbahi2010graph}).
\end{remark}

\begin{lemma}\label{lem:convexity}
The function $f$ (Definition~\ref{def:energy-function}) is convex.
\end{lemma}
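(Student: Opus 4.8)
The plan is to prove convexity of $f = U \circ \delta_\FF$ by exploiting the compositional structure: $\delta_\FF$ is a linear map, and $U = \sum_{e \in E} U_e$ is a sum of convex functions, so $f$ is the composition of a convex function with a linear map, which is convex. First I would establish that $U$ is convex on $C^1(G;\FF)$. By Assumption~\ref{as:strongly_cvx}, each edge potential $U_e$ is $m_e$-strongly convex, hence in particular convex. Since $U(\mathbf{y}) = \sum_{e \in E} U_e(y_e)$ is a finite sum of convex functions acting on independent coordinate blocks $y_e$, it is convex as a function on $C^1(G;\FF)$.

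Next I would invoke the standard fact that precomposition of a convex function with an affine (here, linear) map preserves convexity. Concretely, for any $\mathbf{x}, \mathbf{x}' \in C^0(G;\FF)$ and $\theta \in [0,1]$, linearity of $\delta_\FF$ gives $\delta_\FF(\theta \mathbf{x} + (1-\theta)\mathbf{x}') = \theta \delta_\FF \mathbf{x} + (1-\theta)\delta_\FF \mathbf{x}'$, and then convexity of $U$ yields
\begin{equation}
U\bigl(\delta_\FF(\theta \mathbf{x} + (1-\theta)\mathbf{x}')\bigr) \leq \theta\, U(\delta_\FF \mathbf{x}) + (1-\theta)\, U(\delta_\FF \mathbf{x}'),
\end{equation}
which is exactly $f(\theta \mathbf{x} + (1-\theta)\mathbf{x}') \leq \theta f(\mathbf{x}) + (1-\theta) f(\mathbf{x}')$.

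Alternatively, since the preceding lemmas have already been framed in terms of gradients and the nonlinear sheaf Laplacian $L_\FF^{\nabla U} = \nabla f$, I could give a first-order/monotonicity argument instead: show $\langle \nabla f(\mathbf{x}) - \nabla f(\mathbf{y}), \mathbf{x} - \mathbf{y}\rangle \geq 0$. Writing $\nabla f = \delta_\FF^\top \circ \nabla U \circ \delta_\FF$ and setting $\mathbf{u} = \delta_\FF \mathbf{x}$, $\mathbf{v} = \delta_\FF \mathbf{y}$, the inner product becomes $\langle \nabla U(\mathbf{u}) - \nabla U(\mathbf{v}), \mathbf{u} - \mathbf{v}\rangle$ after moving $\delta_\FF^\top$ across the adjoint, and this is nonnegative by monotonicity of $\nabla U$, which follows from the (strong) convexity of each $U_e$ in Assumption~\ref{as:strongly_cvx}. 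This route harmonizes with the strong-convexity estimate likely needed downstream for the linear convergence theorem.

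There is no real obstacle here; the result is essentially definitional once convexity of $U$ is in hand. The only point requiring mild care is confirming that convexity of the coordinate-wise sum $U = \sum_e U_e$ over the direct-sum space $C^1(G;\FF)$ follows from convexity of each summand, which is immediate because the blocks $y_e$ are independent coordinates and a sum of convex functions is convex. Both proof routes are short; I would present the composition-with-a-linear-map argument as the cleanest, and mention the monotone-gradient formulation since it connects directly to the sheaf Laplacian notation already in play.
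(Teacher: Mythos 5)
Your proposal is correct and follows essentially the same route as the paper: Assumption~\ref{as:strongly_cvx} gives convexity of each $U_e$, hence of the sum $U$, and precomposition with the linear map $\delta_\FF$ preserves convexity. The paper's only cosmetic difference is that it records $U$ as $m$-strongly convex with $m=\min\{m_1,\dots,m_{|E|}\}$ before invoking the same composition argument; your alternative monotone-gradient route is a fine extra but not needed.
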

 \begin{proof}
     By Assumption~\ref{as:strongly_cvx}, the edge potential~$U_e$ is $m_e$-strongly convex for each $e\in E$, which implies the global potential~$U(\mathbf{y}) = \sum_{e\in E} U_e(y_e)$ is $m$-strongly convex where $m=\min \{m_1, \dots, m_{|E|}\}$, and since $\delta_\FF$ is linear, the composition $f=U\circ\delta_\FF$ is then convex.
 \end{proof}

We denote $\XX^* = \{\mathbf{x}\in C^0(G;\FF)\mid L_\FF^{\nabla U}\mathbf{x}=0 \}$ as the minimizer set and assume it is nonempty. We then denote $f^*$ as the minimum value of $f$, i.e., $f^*:=\inf_\mathbf{x}f(\mathbf{x})$. We define the following global error bound (EB) condition, which will be used in our convergence analysis to bound the distance from agents' 
iterates to the nearest minimizer of their Dirichlet energy. Note that in general, the solution set $\mathcal{X}^*$ is a linear subspace of $C^0(G;\FF)$, meaning the nearest minimizer is always given by orthogonal projection onto this subspace.

\begin{definition}~\label{def:g_eb}
  The \define{global error bound (EB) inequality} holds if there exists a $\kappa>0$ such that
     \begin{equation}\label{eq:eb}
        \min_{\mathbf{x}^*\in\XX^*}\|\mathbf{x} - \mathbf{x}^*\|\leq\kappa\| L_\FF^{\nabla U}\mathbf{x}\|
    \end{equation} 
holds for all $\mathbf{x}\in C^0(G;\FF)$.  
\end{definition}

We next show that the Dirichlet energy function satisfies the global EB inequality, and we give
an explicit form for~$\kappa$. 

\begin{lemma}\label{lem:global_eb}
    Given a coordination sheaf in the sense of Definition~\ref{def:coordination:}, the associated Dirichlet energy $f \coloneqq U\circ\delta_\mathcal{F}$ satisfies the global error bound in the sense of Definition~\ref{def:g_eb}, with constant $\kappa=\frac{1}{m\sigma_2\left(\delta_\FF\right)}$, where $m=\min \{m_1, \dots, m_{|E|}\}$.  
\end{lemma}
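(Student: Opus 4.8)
The plan is to exploit the fact that the minimizer set $\XX^*$ is a coset of $\ker\delta_\FF$, so that the projection residual onto $\XX^*$ is governed by the smallest \emph{nonzero} singular value $\sigma_2(\delta_\FF)$ of the coboundary. First I would pin down the structure of $\XX^*$. Since $f$ is convex (Lemma~\ref{lem:convexity}) and differentiable with $\nabla f = L_\FF^{\nabla U}$, the set $\XX^* = \{\mathbf{x}\mid L_\FF^{\nabla U}\mathbf{x}=0\}$ is precisely the set of global minimizers of $f$. Because $U$ is $m$-strongly convex with $m=\min_e m_e$ while $\delta_\FF$ restricted to $(\ker\delta_\FF)^\perp$ is injective, the map $\mathbf{x}\mapsto U(\delta_\FF\mathbf{x})$ is strictly convex along $(\ker\delta_\FF)^\perp$; hence (using that $\XX^*\neq\varnothing$) the minimizer set is a single coset $\mathbf{w}^\star + \ker\delta_\FF$ of the direction space $\ker\delta_\FF = H^0(G;\FF)$. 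Taking $\mathbf{x}^*$ to be the orthogonal projection of $\mathbf{x}$ onto this closed affine subspace then gives the crucial geometric fact $\mathbf{x}-\mathbf{x}^*\in(\ker\delta_\FF)^\perp=\image\delta_\FF^\top$, which is what lets the smallest nonzero singular value appear.

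With $d\coloneqq\mathbf{x}-\mathbf{x}^*$, $y\coloneqq\delta_\FF\mathbf{x}$, and $y^*\coloneqq\delta_\FF\mathbf{x}^*$, I would then run a short chain. Summing Assumption~\ref{as:strongly_cvx} over edges yields strong monotonicity of $\nabla U$ with constant $m$, so
\[ m\|\delta_\FF d\|^2 \leq \langle \nabla U(y)-\nabla U(y^*),\,\delta_\FF d\rangle = \langle L_\FF^{\nabla U}\mathbf{x}-L_\FF^{\nabla U}\mathbf{x}^*,\,d\rangle = \langle L_\FF^{\nabla U}\mathbf{x},\,d\rangle, \]
where I moved $\delta_\FF$ across the inner product by adjointness and used $L_\FF^{\nabla U}\mathbf{x}^*=0$. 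Cauchy--Schwarz bounds the right-hand side by $\|L_\FF^{\nabla U}\mathbf{x}\|\,\|d\|$. Finally, since $d\in(\ker\delta_\FF)^\perp$, expanding $d$ in the right singular vectors of $\delta_\FF$ with nonzero singular values gives $\|\delta_\FF d\|\geq\sigma_2(\delta_\FF)\,\|d\|$; substituting and dividing by $\|d\|$ (the case $d=0$ being immediate) yields the global error bound. Carrying the constants through produces $\|\mathbf{x}-\mathbf{x}^*\|\leq \tfrac{1}{m\,\sigma_2(\delta_\FF)^2}\,\|L_\FF^{\nabla U}\mathbf{x}\|$, i.e.\ $\kappa = 1/(m\,\lambda_2(L_\FF))$ since $\sigma_2(\delta_\FF)^2=\lambda_2(L_\FF)$. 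As a sanity check, the linear case of Example~\ref{ex:linear_L} ($U=\tfrac12\|\cdot\|^2$, $m=1$) reduces this to the familiar $1/\lambda_2(L_\FF)$; the squared singular value traces to $\delta_\FF$ entering \emph{twice} (inside strong convexity and again via adjointness), which suggests the exponent on $\sigma_2$ in the stated $\kappa$ should be checked.

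The main obstacle is the opening step: establishing that $\XX^*$ is a coset of $\ker\delta_\FF$ and hence that the projection residual $d$ carries no component in $\ker\delta_\FF$. This is exactly what licenses replacing the vanishing smallest singular value by the smallest \emph{nonzero} one, and it is the sole place the coordination-sheaf hypotheses (strong convexity of the $U_e$ plus linearity of $\delta_\FF$) are genuinely needed. Once this geometry is secured, the monotonicity estimate, the adjoint manipulation, and Cauchy--Schwarz are routine. A secondary point to dispatch cleanly is that the infimum $\min_{\mathbf{x}^*\in\XX^*}\|\mathbf{x}-\mathbf{x}^*\|$ is attained, which is automatic because $\XX^*$ is a nonempty closed affine subspace and the attaining point is the orthogonal projection used throughout.
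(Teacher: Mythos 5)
Your argument is sound and takes a genuinely different route from the paper's, and the difference matters here. The paper proves this lemma by citation: it invokes the result of~\cite{karimi2016linear} that a $\varrho$-strongly convex function composed with a linear map satisfies the Polyak--{\L}ojasiewicz (PL) inequality, and then converts PL to EB via $\kappa = 1/\mu$. You instead give a self-contained derivation: establish that $\XX^*$ is a coset of $\ker\delta_\FF$ (the paper glosses this, asserting only that $\XX^*$ is a subspace), then chain strong monotonicity of $\nabla U$, adjointness of $\delta_\FF$ and $\delta_\FF^\top$, Cauchy--Schwarz, and the bound $\|\delta_\FF d\| \geq \sigma_2(\delta_\FF)\|d\|$ for $d \in (\ker\delta_\FF)^\perp$. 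Both routes hinge on the same geometric fact---the projection residual is orthogonal to $\ker\delta_\FF$, which is what licenses the smallest \emph{nonzero} singular value---but in the paper that fact is hidden inside the cited PL result, whereas you make it explicit and track the constants.

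The exponent discrepancy you flag at the end is real, and it is the paper's constant that is wrong, not yours. The paper's transcription of the cited result in~\eqref{eq:gPL} drops a square: for $g$ $\varrho$-strongly convex, the composition $g(Ax)$ satisfies PL with parameter $\varrho\,\sigma_2(A)^2 = \varrho\,\lambda_2(A^\top A)$ in this paper's notation (where $\sigma_2$ is the smallest nonzero singular value), not $\varrho\,\sigma_2(A)$; the square is forced because $A$ enters twice, exactly as in your chain. The linear case settles it: take $U_e = \tfrac{1}{2}\|\cdot\|^2$, so $m=1$, $L_\FF^{\nabla U} = L_\FF$, and $\XX^* = \ker\delta_\FF$, and let $\mathbf{x}$ be a unit eigenvector of $L_\FF$ with eigenvalue $\lambda_2(L_\FF)$. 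Then $\min_{\mathbf{x}^*\in\XX^*}\|\mathbf{x}-\mathbf{x}^*\| = 1$ while $\|L_\FF^{\nabla U}\mathbf{x}\| = \lambda_2(L_\FF)$, so any valid EB constant must satisfy $\kappa \geq 1/\lambda_2(L_\FF) = 1/\sigma_2(\delta_\FF)^2$, which strictly exceeds the claimed $1/\bigl(m\,\sigma_2(\delta_\FF)\bigr)$ whenever $\sigma_2(\delta_\FF) < 1$. Your $\kappa = \frac{1}{m\,\sigma_2(\delta_\FF)^2} = \frac{1}{m\,\lambda_2(L_\FF)}$ is correct (and sharp in the linear case), and it is also the constant consistent with Remark~\ref{rem:conv_rate} and the discussion in Section~\ref{sec:discussion}, which assert that $\kappa$ scales inversely with $\lambda_2(L_\FF)$. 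The only step to tighten in your write-up is the coset claim: rather than arguing ``strict convexity along $(\ker\delta_\FF)^\perp$,'' note that strong convexity of $U$ gives a unique minimizer $y^\star$ of $U$ over the subspace $\image\delta_\FF$, so that $\XX^* = \delta_\FF^{-1}(y^\star)$ is a nonempty coset of $\ker\delta_\FF$; this also makes attainment of the minimum in Definition~\ref{def:g_eb} immediate, as you observe.
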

\begin{proof}
    It is shown in~\cite[Appendix B]{karimi2016linear} that for a~$\varrho$-strongly convex function~$g$
    and a linear map~$x \mapsto Ax$, the function~$g(Ax)$ is convex and it satisfies the Polyak-{\L}ojasiewicz (PL)
    inequality with parameter~$=\varrho\sigma_2(A)$.  That is, $g(Ax)$ satisfies
    \begin{equation} \label{eq:gPL}
        \frac{1}{2}\|\nabla g(Ax)\|^2 \geq \varrho~\sigma_2(A)\big(g(Ax) - g^*),
    \end{equation}
    where~$g^*$ is the global minimum value of~$g \circ A$, which exists because~$g \circ A$ is convex. 
    The Dirichlet energy~$f$ is itself the composition of an~$m$-strongly convex function~$U$ with a linear map~$\delta_{\mathcal{F}}$ 
    by definition, and hence~\eqref{eq:gPL} gives
    \begin{equation} \label{eq:gPLforU}
        \frac{1}{2}\| L_\FF^{\nabla U}\mathbf{x}\|^2 \geq m\sigma_2(\delta_{\mathcal{F}})\bigl(f(\mathbf{x})-f^*\bigr). 
    \end{equation}
    The result follows from observing that if a function satisfies the PL inequality
    with parameter~$\mu$, then it satisfies the EB condition with 
    parameter~$\kappa = \frac{1}{\mu}$~\cite[Appendix A]{karimi2016linear}. 
\end{proof}

Next we define the following nonnegative quantities that measure the progress of the algorithm.
\begin{align}
\alpha\left(t\right) & \coloneqq f\big(\mathbf{x}(t)\big)-f^*, ~\label{eq:dist_cost}\\
\beta\left(t\right) & \coloneqq \sum_{\tau=t-B-1}^{t-1}\left\| \mathbf{x}\left(\tau+1\right)-\mathbf{x}\left(\tau\right)\right\|^{2}.~\label{eq:square_sum_state_diff}
\end{align}

We then obtain the following global periodic convergence result for sequences $\{\alpha(t),\beta(t)\}$ defined as in~\eqref{eq:dist_cost} and~\eqref{eq:square_sum_state_diff}.

\begin{theorem}\label{thm:linear_conv}
    Let Assumptions~\ref{assumption:partial-asynchrony} (i.e., the partial asynchrony setting),~\ref{as:strongly_cvx}, and~\ref{as:u_e_lip} hold. Given the cost~\eqref{eq:cost_func} defined in Problem~\ref{pro:1}, there exists some $\gamma_0>0$ such that if the step size satisfies $\gamma\in\left(0,\gamma_0\right)$, the sequences $\{\alpha(t),\beta(t)\}$ generated by~\eqref{eq:distributed_gd} satisfy 
    \begin{align}
        \alpha\big(r(B+1)\big) &\leq a(1-\gamma c)^r, ~\label{eq:conctraction_1}\\
        \beta\big(r(B+1)\big) &\leq b(1-\gamma c)^r ~\label{eq:conctraction_2}
    \end{align}
    for all $r\in\mathbb{N}$, where $a,b$, and $c<\frac{1}{\gamma}$ are positive constants.
\end{theorem}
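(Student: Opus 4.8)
The plan is to follow the standard template for partially asynchronous gradient methods (cf.~\cite{bertsekas1989parallel}), but to replace the usual strong-convexity step with the Polyak--{\L}ojasiewicz (PL) inequality established inside the proof of Lemma~\ref{lem:global_eb}, since here the minimizer set $\XX^*$ is an affine subspace rather than a single point. Throughout I would write $d_i(t) \coloneqq \bigl[L_\FF^{\nabla U}\mathbf{x}^i(t)\bigr]_i$ for agent $i$'s update direction, so that by~\eqref{eq:distributed_gd} the nonzero blocks of $\mathbf{x}(t+1)-\mathbf{x}(t)$ are exactly $-\gamma d_i(t)$ over the active agents $i$ with $t\in T_i$, and $\|\mathbf{x}(\tau+1)-\mathbf{x}(\tau)\|^2=\gamma^2\sum_{i:\,\tau\in T_i}\|d_i(\tau)\|^2$.

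First I would derive a per-iteration descent inequality. Since $f$ is $K$-smooth (Lemma~\ref{lem:k_smooth}), the descent lemma gives
\[
f(\mathbf{x}(t+1)) \le f(\mathbf{x}(t)) + \langle \nabla f(\mathbf{x}(t)), \mathbf{x}(t+1)-\mathbf{x}(t)\rangle + \tfrac{K}{2}\|\mathbf{x}(t+1)-\mathbf{x}(t)\|^2.
\]
The cross term equals $-\gamma\sum_{i:\,t\in T_i}\langle [\nabla f(\mathbf{x}(t))]_i, d_i(t)\rangle$, and because $d_i(t)=[\nabla f(\mathbf{x}^i(t))]_i$ I would split $d_i(t)=[\nabla f(\mathbf{x}(t))]_i + \bigl([\nabla f(\mathbf{x}^i(t))]_i-[\nabla f(\mathbf{x}(t))]_i\bigr)$. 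The first summand produces the ``good'' term $-\gamma\sum_{i:\,t\in T_i}\|[\nabla f(\mathbf{x}(t))]_i\|^2$, while the second is a delay error controlled via Young's inequality and the Lipschitz bound $\|\nabla f(\mathbf{x}^i(t))-\nabla f(\mathbf{x}(t))\|\le K\|\mathbf{x}^i(t)-\mathbf{x}(t)\|$. The central estimate is then the stale-state gap: by Assumption~\ref{assumption:partial-asynchrony}(b) each delayed block $x_j(\tau_j^i(t))$ differs from $x_j(t)$ by a telescoping sum of at most $B$ consecutive increments, so $\|\mathbf{x}^i(t)-\mathbf{x}(t)\|^2$ is bounded by a constant multiple of $\sum_{\tau=t-B}^{t-1}\|\mathbf{x}(\tau+1)-\mathbf{x}(\tau)\|^2$, which is precisely the quantity assembled in $\beta(t)$ via~\eqref{eq:square_sum_state_diff}. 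This is what couples the cost decrease to $\beta$.

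Next I would sum the per-iteration inequality over a block of $B+1$ consecutive iterations $t\in[r(B+1),(r+1)(B+1))$. By Assumption~\ref{assumption:partial-asynchrony}(a) every agent is active at least once in such a block, so the accumulated good term dominates a full $\|L_\FF^{\nabla U}\mathbf{x}\|^2$ evaluated within the block, up to Lipschitz corrections relating the various iterates in the block (again absorbed into $\beta$). Applying the PL inequality from the proof of Lemma~\ref{lem:global_eb}, namely
\[
\tfrac12\|L_\FF^{\nabla U}\mathbf{x}\|^2\ge m\,\sigma_2(\delta_\FF)\,\bigl(f(\mathbf{x})-f^*\bigr),
\]
converts the accumulated gradient norm into a decrease of $\alpha$. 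The outcome is a coupled pair of block inequalities of schematic form $\alpha\bigl((r{+}1)(B{+}1)\bigr)\le \alpha\bigl(r(B{+}1)\bigr) - \gamma c_1\,\alpha\bigl(r(B{+}1)\bigr) + \gamma c_2\,\beta\bigl(r(B{+}1)\bigr)$, together with a companion bound showing $\beta$ is itself controlled by $\gamma^2$ times nearby gradient norms, hence ultimately by $\alpha$.

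The hard part will be closing this coupling into a single geometric contraction with an admissible rate $c<1/\gamma$. The delay error and the quadratic smoothness term each contribute positive multiples of $\gamma\beta$ and of $\gamma^2$-order terms that must be dominated by the negative $\gamma c_1\alpha$ term; this forces a threshold $\gamma_0$ scaling like $1/\bigl(K(B+1)^2\bigr)$, so that $c$ and the constants $a,b$ inherit their dependence on the delay bound $B$ as advertised. I expect the cleanest route is to treat $(\alpha,\beta)$ jointly, e.g.\ via a Lyapunov combination $\alpha+\eta\beta$ for a suitable $\eta>0$, or by a two-sequence induction, verifying~\eqref{eq:conctraction_1} and~\eqref{eq:conctraction_2} simultaneously for all $r\in\mathbb{N}$. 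Boundedness below (Lemma~\ref{lem:f_bounded}) and convexity (Lemma~\ref{lem:convexity}) guarantee that $f^*$ is finite and $\alpha(t)\ge 0$, so the contraction is well posed.
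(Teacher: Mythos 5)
Your strategy is sound, but it is genuinely different from what the paper does, because the paper does not prove the contraction directly at all. Its proof of Theorem~\ref{thm:linear_conv} is a one-line verification-and-citation: Lemmas~\ref{lem:f_bounded}--\ref{lem:global_eb} (boundedness below, $K$-smoothness, convexity, and the global error bound with explicit constant $\kappa$) are established precisely because they are the hypotheses of an off-the-shelf convergence theorem for partially asynchronous gradient methods, namely \cite[Theorem~9]{zhou2018distributed}, which is then invoked to conclude. What you have sketched is, in effect, a reconstruction of the proof of that cited theorem: the descent-lemma step, the split of each update direction into a current-gradient block plus a delay error, the telescoping bound of the stale-state gap by $\beta(t)$, the block summation over $B+1$ iterations using Assumption~\ref{assumption:partial-asynchrony}(a), and the PL conversion are exactly the Tseng-style ingredients \cite{tseng1991rate} underlying that result. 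Your substitution of the PL inequality for the error bound is harmless: the paper's Lemma~\ref{lem:global_eb} derives the EB constant from the same PL inequality via \cite{karimi2016linear}, so the two are interchangeable here. What each approach buys: the paper's modular route is short, rigorous given the external result, and isolates the sheaf-specific content (the spectral quantities $\lambda_{\max}(L_\FF)$ and $\sigma_2(\delta_\FF)$ entering $K$ and $\kappa$) in the lemmas; your route would yield a self-contained proof with explicit, traceable constants $a$, $b$, $c$, and $\gamma_0$.

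The caveat is that the one step you explicitly defer --- closing the coupled $(\alpha,\beta)$ recursion into a single geometric contraction with admissible rate $c<1/\gamma$ --- is the technical heart of the argument, and it is precisely the part the paper outsources to the citation. Your two proposed devices (a Lyapunov combination $\alpha+\eta\beta$, or a simultaneous two-sequence induction) can both be made to work, but until one of them is carried out with explicit constants, the proposal is a correct plan rather than a complete proof. To finish, either execute that induction in full (following \cite{tseng1991rate} or the corresponding lemmas leading to \cite[Theorem~9]{zhou2018distributed}), or close by citation as the paper does after verifying the hypotheses.
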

\begin{proof}
Since Assumptions~\ref{assumption:partial-asynchrony} to~\ref{as:u_e_lip} hold, by Lemmas \ref{lem:f_bounded} to~\ref{lem:global_eb}, the result follows by~\cite[Theorem~9]{zhou2018distributed}.
\end{proof}

\begin{remark}~\label{rem:conv_rate}
    A smaller global EB constant $\kappa$ from~\eqref{eq:eb} provides a tighter upper bound in~\cite[Lemma 8]{zhou2018distributed}, which makes $c$ in~\eqref{eq:conctraction_1} and~\eqref{eq:conctraction_2} larger. This allows for larger contraction between iterations, i.e., faster convergence. The global EB constant $\kappa$ also depends on the spectrum of the (linear) sheaf Laplacian $L_\FF$, which we will discuss in detail later (see Section~\ref{sec:discussion}).
\end{remark}

Theorem~\ref{thm:linear_conv} implies that $\{\alpha(t),\beta(t)\}$ converges at least linearly with every $B+1$ step. Also by Lemma~\ref{lem:global_eb}, the global EB inequality holds for any $\mathbf{x}\in C^0(G;\FF)$; this implies that the linear convergence rate can be guaranteed regardless of the initial condition, i.e., the global section converges to the optimum at least linearly with every $B+1$ step, and the convergence result does not depend on initial conditions~$x_{1}\left(0\right),\ldots,x_{N}\left(0\right)$ for any agent.  

\begin{figure*}[bht]
    \centering
    \begin{minipage}{0.75\textwidth}
    \includegraphics[width=\textwidth]{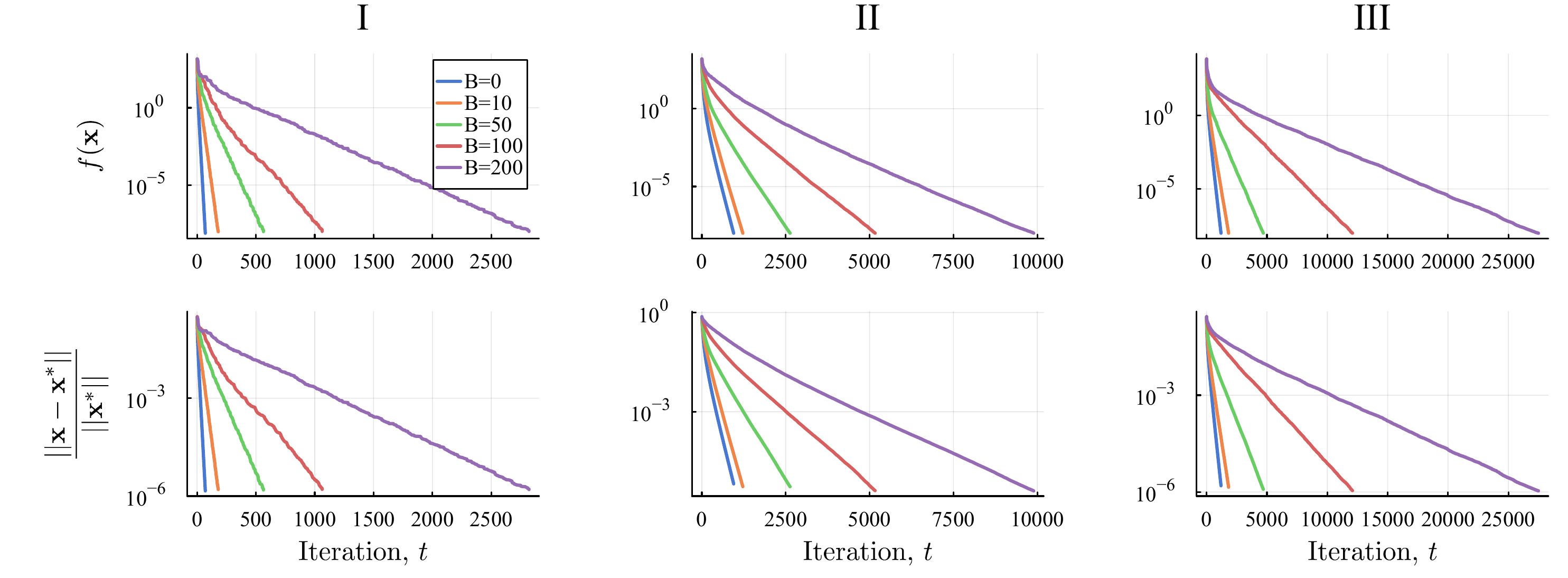}
    \end{minipage}
    \begin{minipage}{0.24\textwidth}
    \caption{Asynchronous convergence for various values of $B$. The top row plots energy while the bottom row plots relative error. The experiments were conducted on different sheaves over the same random 4-regular graph $G$. I) the constant sheaf $\underline{\R}^4$ over $G$. II) a sheaf with random restriction maps $\FF_{i\face ij}\sim\mathrm{rand}(1,4)$. III) a sheaf with random matrix-weighted edges. The positive semi-definite  matrix weights had a 0.2 probability of being strictly positive definite.}
    \label{fig:rm_experiemtn}
    \end{minipage}
\end{figure*}

\section{Numerical Results}\label{sec:sim}
We implemented several different experiments to validate our theoretical results. 
Throughout this section, we use $\mathrm{rand}(n,m)$ to denote the probability distribution of $n\times m$ matrices whose entries are drawn uniformly at random from the range $[0,1]$.

\subsection{Experimental Setup}

Primitives for constructing cellular sheaves and sheaf Laplacians are implemented in the {\tt\small AlgebraicOptimization.jl} Julia package\footnote{Code is available at~\url{https://github.com/AlgebraicJulia/AlgebraicOptimization.jl/tree/acc-experiments}}. For our experiments, we implemented a simulation of the partially asynchronous distributed computing environment as follows. 

Each agent tracks its local state as well as the (potentially stale) states of its neighboring agents. To initialize the simulation, each agent $i$ generates a local upper bound $b_i\in [B]$ on how frequently it will update its state. It also generates a time $p_i\in [b_i]$ to update its state. There is a global iteration counter $t$, and when $t \text{ mod } b_i = p_i$, agent $i$ updates its local state according to the iteration in \eqref{eq:distributed_gd}. In other words, the set $T_i$ of times when agent $i$ computes a local update is
\[
T_i=\{t_i\in \mathbb{N}\mid t_i \text{ mod } b_i = p_i\}.
\]
Similarly, each agent generates an upper bound $b_i'\in [B]$ and a time $p_i'\in [b_i']$ for how frequently it will broadcast its state to neighbors. When $t\text{ mod } b_i' =p_i'$, agent $i$ sends its most recent local state $x_i(t)$ to all neighboring agents $j\sim i$ who immediately use this value to update $\mathbf{x}^j(t)$. To make the update and broadcast schedules more random, each agent also resamples its $p_i$ and $p_i'$ values after every update and broadcast respectively.

The update bounds $b_i$ were drawn from an evenly weighted mixture of normal distributions centered at $0.05  B$ and $0.5B$. Similarly, the broadcast bounds $b_i'$ were drawn from an evenly weighted mixture of normals centered at $0.1B$ and $0.8  B$. This models a heterogeneous system with a mixture of fast and slow agents in terms of both update and broadcast times with more frequent computation than communication.

\subsection{Results}
\paragraph*{Experiment 1} We tested asynchronous sheaf diffusion for different types of sheaves and different values of the global delay bound $B$. In particular, we used a random 20 node 4-regular graph $G$ as a fixed communication topology and varied the types of restriction maps to produce different test cases. Each run of asynchronous sheaf diffusion was initialized from a fixed value of $\mathbf{x}(0)$ drawn from a Gaussian distribution centered at the origin with a variance of 10. Figure \ref{fig:rm_experiemtn} shows the sheaf Dirichlet energy and relative error in global state over time for various values of $B$ ranging from 0 to 200. In particular, Figure \ref{fig:rm_experiemtn}.III relies on the following example to construct restriction maps.
\begin{example}[Matrix-Weighted Sheaf]
    A \define{matrix-weighted graph} is an undirected graph $G$ with positive semi-definite $n\times n$ matrices $W_{ij}$ associated to each edge $ij$. These have an associated matrix-weighted Laplacian with diagonal blocks $L_{i,i} = \sum_{j \in N_i} W_{ij}$ and off-diagonal blocks $L_{i,j} = - W_{ij}$. Matrix-weighted graphs and Laplacians have found applications in formation control \cite{trinh_matrix-weighted_2018}. To any matrix-weighted graph $G$, we can construct an associated \define{matrix-weighted sheaf} $\FF$ over $G$ as follows. For each matrix weight $W_{ij}$, perform a rank-revealing QR decomposition $W_{ij}= Q_{ij}R_{ij}$ and set each restriction map $\FF_{i\face ij}=\FF_{j\face ij} = R_{ij}$. The sheaf Laplacian of $\FF$ then corresponds to the matrix-weighted Laplacian of $G$ \cite{hansen2020laplacians}.
\end{example}

\begin{figure*}[bht]
    \centering
    \begin{minipage}{0.24\textwidth}
    \caption{Running sheaf diffusion over the same sheaf $\FF$ from 100 different initial conditions with $B=50$. The sheaf is over a random 4-regular graph with random restriction maps $\FF_{i\face ij}\sim \mathrm{rand}(1,4)$. The initial conditions are sampled randomly from a Gaussian centered at the origin with variance 10.}
    \label{fig:many_initializations}
    \end{minipage}
    \begin{minipage}{0.75\textwidth}
    \includegraphics[width=\linewidth]{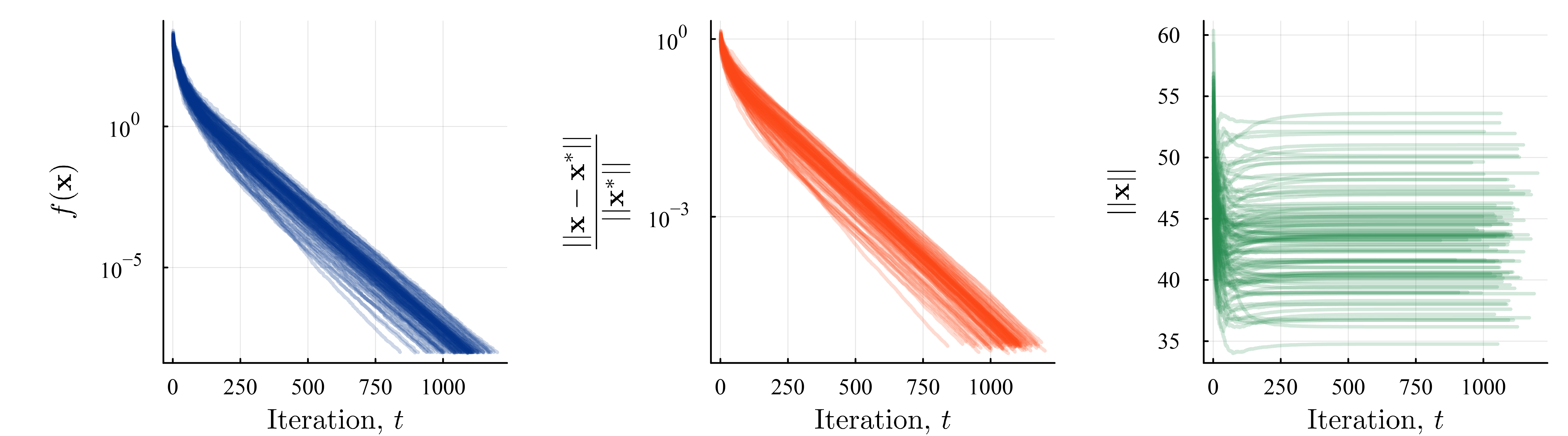}
    \end{minipage}
\end{figure*}

\paragraph*{Experiment 2} To test the global convergence properties of asynchronous sheaf diffusion, we fixed a sheaf $\FF$ over a random 20 node 4-regular graph with randomly sampled restriction maps $\FF_{i\face ij}\sim \mathrm{rand}(1,4)$. We then ran asynchronous sheaf diffusion with a fixed delay bound of $B=50$ from 100 different initial conditions sampled from a Gaussian distribution centered at the origin with a variance of 10. The energy value, relative error, and iterate norm over time are shown in Figure \ref{fig:many_initializations}.

\paragraph*{Experiment 3} In the case of synchronous sheaf diffusion, the iterates converge to the orthogonal projection of $\mathbf{x}(0)$ onto the space of global sections, which we denote as $\mathbf{x}(0)^\bot$ \cite{hansen2020laplacians}. In this experiment, we investigate the effect of asynchrony on this result. We used the same sheaf $\FF$ from Experiment 2, and ran asynchronous sheaf diffusion to convergence with various delay bounds starting from $B=0$ up to $B=2^{15}$. To mitigate the effects of random update schedules, we conducted three trials for each $B$ value and averaged the results. We then measured the distance between the average of the final iterates $x^*$ reached for each $B$ and orthogonal projection of the initial condition onto $\Gamma(G;\FF)$. This is shown in Figure \ref{fig:op_experiment}.

\paragraph*{Experiment 4} Finally, we investigated the impact of the smallest non-zero eigenvalue of the sheaf Laplacian~$\lambda_2(L_\FF)$ on convergence of asynchronous sheaf diffusion. For this experiment, we generated sheaves over Erdos-Renyi random graphs with 20 nodes and 0.3 connection probability whose restriction maps were sampled from $\mathrm{rand}(1,4)$ and computed the smallest non-zero eigenvalue of their Laplacian. We then ran asynchronous sheaf diffusion to convergence for each sheaf with a fixed delay bound of $B=50$ and recorded the number of iterations required for convergence. The scatterplot of $\lambda_2$ versus number of iterations for convergence is shown in Figure \ref{fig:lambda2_experiment}.

\begin{figure}[bht]
    \centering
    \begin{subfigure}[b]{0.24\textwidth}
        \centering
        \includegraphics[width=\textwidth]{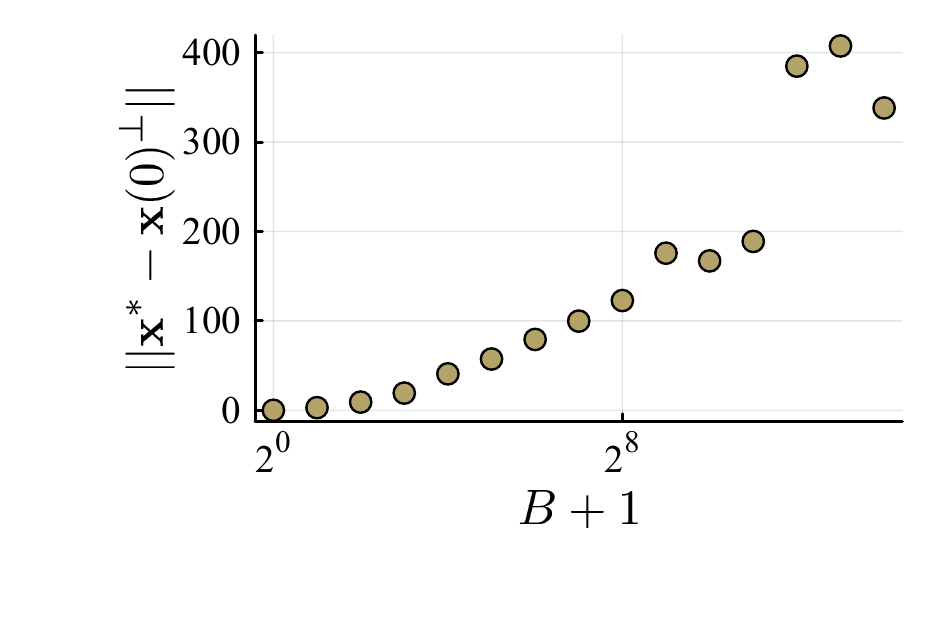}
        \caption{}
        \label{fig:op_experiment}
    \end{subfigure}\begin{subfigure}[b]{0.24\textwidth}
        \centering
        \includegraphics[width=\textwidth]{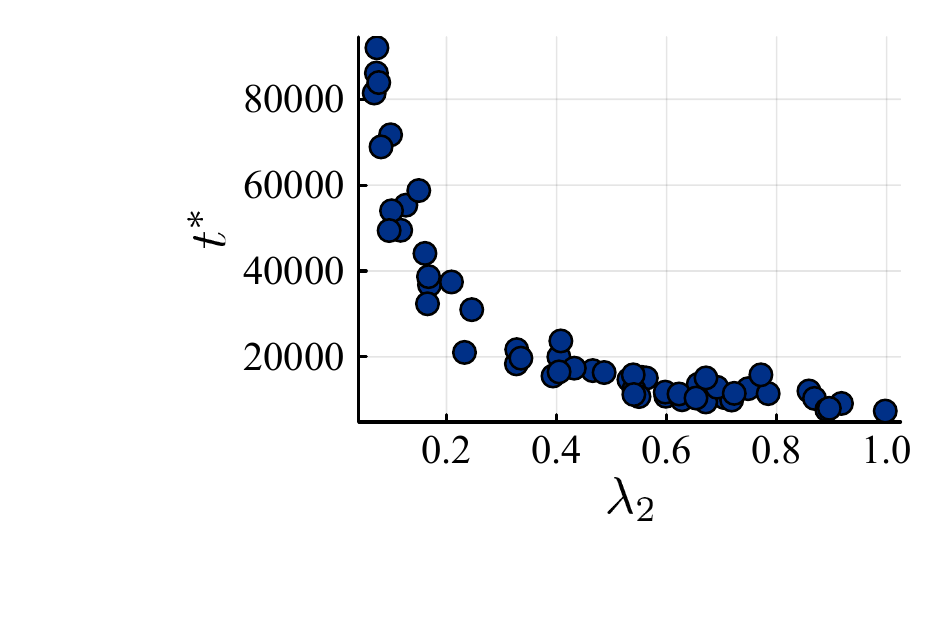}
        \caption{}
        \label{fig:lambda2_experiment}
    \end{subfigure}
    \caption{\textbf{(a)} Distance between solution obtained via asynchronous sheaf diffusion and orthogonal projection of the initial condition $\mathbf{x}(0)$ onto $\Gamma(G;\FF)$ for various values of $B$. \textbf{(b)} Smallest non-zero eigenvalue of the sheaf Laplacian ($\lambda_2$) versus number of iterations for asynchronous sheaf diffusion to reach convergence ($t^*$) for Erdos-Renyi random graphs with 20 nodes and 0.3 connection probability. In all cases, the communication bound was fixed to $B=50$.}
\end{figure}

\section{Discussion and Future Work}\label{sec:discussion}
Experimental observations are consistent with the main claim of Theorem \ref{thm:linear_conv}, namely $B+1$ step global linear convergence of asynchronous sheaf diffusion from arbitrary initial conditions. Experiment 1 demonstrates the scaling of the linear convergence rate as $B$ increases for various different types of sheaves. Importantly, testing on the constant sheaf (Figure \ref{fig:rm_experiemtn}.I) shows that our results specialize to the case of standard consensus on graphs in an asynchronous setting. Additionally, the case $B=0$ corresponds to the synchronous case, which as expected converges the most quickly. Overall, this experiment shows a consistent relationship between convergence rate and $B$-value, with larger $B$'s resulting in a slower linear convergence rate for all types of sheaves tested. 

As can be seen from Experiment 2, all trajectories converge at a linear rate regardless of initial condition. This backs up the theoretical claim of global linear convergence, rather than linear convergence only after some $\hat{t}$ as is sometimes seen in the literature \cite{tseng1991rate}. One interesting observation from Figure \ref{fig:many_initializations} is that the norm of the iterates quickly becomes constant. This suggests that our algorithm quickly converges to a sphere in $C^0(G;\FF)$ and then moves around in this sphere to reach a global section. In addition, Figure \ref{fig:op_experiment} clearly shows a positive correlation between $B$ and the distance between $\mathbf{x}^*$ and $\mathbf{x}(0)^\bot$, meaning that for larger communication and computation delays, solutions drift further from the solution obtained via synchronous computation.

The convergence of the algorithm is related to the smallest nonzero eigenvalue of the linear sheaf Laplacian $\lambda_2(L_\FF)$, as shown in Figure \ref{fig:lambda2_experiment}. This is also related to the smallest non-zero singular value of the coboundary operator 
\begin{equation}\label{eq:eigenvalue_L}
    \sigma_2(\delta_\FF) = \sqrt{\lambda_2(\delta^\top_\FF \delta_\FF)} = \sqrt{\lambda_2(L_\FF)}.
\end{equation}
From a theoretical perspective, the EB constant $\kappa$ is inversely proportional to $\sigma_2(\delta_\FF)$ (Lemma~\ref{lem:global_eb}), hence  $\kappa$ is also inversely proportional to $\lambda_2(L_\FF)$ by~\eqref{eq:eigenvalue_L}, which makes the convergence rate proportional to $\lambda_2(L_\FF)$ (Remark~\ref{rem:conv_rate}). 
This is clearly supported by Experiment 4. It is therefore desirable to design sheaves such that $\lambda_2(L_\FF)$ is as large as possible. An interesting area of future work would be to investigate how the geometric properties of the restriction maps of a cellular sheaf combine with the topological properties of the graph over which it is defined to determine how well conditioned its Laplacian is for asynchronous diffusion. The study of the spectral theory of cellular sheaves was initiated in \cite{Hansen_2019_spectral}. Other areas for future work include incorporating directed or time varying communication topologies.

\bibliographystyle{IEEEtran}
\bibliography{root}

\end{document}